\numberwithin{equation}{section}
\newtheorem{theorem}{Theorem}[section]
\newtheorem{definition}[theorem]{Definition}
\newtheorem{lemma}[theorem]{Lemma}
\begin{document}
\baselineskip=16pt

\title{Signless Laplacian spectral radius and matchings in graphs\footnote{This work was supported by the National Natural Science Foundation of China [61773020]. }
}

\author{Chang Liu, Yingui Pan, Jianping Li\thanks{Corresponding author:Jianping Li (lijianping65@nudt.edu.cn)}\\
	\small  College of Liberal Arts and Sciences, National University of Defense Technology, \\
	\small  Changsha, China, 410073.\\
}
\date{\today}

\maketitle

\begin{abstract}
The signless Laplacian matrix $Q(G)$ of a graph $G$ is defined as  $Q(G)=D(G)+A(G)$, where $D(G)$ is the diagonal matrix of vertex degrees and $A(G)$ is the adjacency matrix of $G$. Let $q_1(G)$ denote the signless Laplacian spectral radius of $G$, i.e. the largest eigenvalue of the signless  Laplacian matrix $Q(G)$. Let $r(n)$ be the largest root of the equation $x^3-(3n-7)x^2+n(2n-7)x-2(n^2-7n+12)=0$. In this paper, we prove that for a positive even integer $n\geq4$, if $G$ is an $n$-vertex connected graph with \begin{equation*}
		q_1(G)>\begin{cases} $r(n)$, & \mbox{for }n\geq10\mbox{ or } n=4,\\ 
			4+2\sqrt{3}, & \mbox{for }n=6,\\
			6+2\sqrt{6}, & \mbox{for }n=8, \end{cases}
\end{equation*}
then $G$ has a perfect matching. It is sharp in the sense that there exist graphs $H$ such that $H$ has no perfect matching and $q_1(H)$ equals the lower bound for $q_1(G)$ for every positive even integer $n\geq4$.
\\[2pt]
\textbf{Keywords:} Signless Laplacian spectral radius; Perfect matching
\end{abstract}

\section{Introduction}
Let $G$ be a simple connected graph with vertex set $V(G)$ and edge set $E(G)$. The signless Laplacian matrix $Q(G)$ of $G$ is defined as $Q(G)=D(G)+A(G)$, where $D(G)$ is the diagonal matrix of vertex degrees and $A(G)$ is the adjacency matrix of $G$. The largest eigenvalue of $Q(G)$, written as  $q_1(G)$, is called the signless Laplacian spectral radius of $G$. For a vertex subset $S\subset V(G)$, let $G[S]$ be the subgraph of $G$ induced by the vertex set $S$, and let $G-S$ be the graph obtained from $G$ by deleting the vertices in $S$ together with their incident edges. Let $G_1\vee G_2$ denote the join of two graphs $G_1$ and $G_2$, which is the graph such that $V(G_1\vee G_2)=V(G_1)\cup V(G_2)$ and $E(G_1\vee G_2)=E(G_1)\cup E(G_2)\cup \left\lbrace uv|u\in V(G_1), v\in V(G_2) \right\rbrace $.


A \textit{matching} in a graph  is a set of disjoint edges, and a \textit{perfect matching} in $G$ is a matching covering all vertices of $G$. The sufficient and necessary condition for the existence of perfect matchings in a graph is first given by the Tutte's 1-factor theorem \cite{Tut}, which states $G$ has a perfect matching if and only if $o(G-S)-|S|\leq 0$ for any $S\subset V(G)$, where $o(G-S)$ is the number of odd components in $G-S$. In the past two decades, many researchers devoted to study the matchings in graphs, including maximum matching \cite{Cran,Cioaba,Sj,Jv}, fractional matching \cite{rebe,Yl1,Yl2,SO1}, rainbow matching \cite{LeS}, and matching energy \cite{Chen,Ji} and so on. Among these researches, relations between the eigenvalues and the matchings in graphs have been a hotpot. Chang \cite{chang1} investigated the largest eigenvalue of trees with perfect matchings, while Chang and Tian \cite{chang2} investigated the largest eigenvalue of unicyclic graphs with perfect matchings. In 2005, Brouwer and Haemers \cite{aebro} found some sufficient conditions for the existence of perfect matchings in a graph in terms of the Laplacian matrix spectrum, and they also gave an improved result for an $r$-regular graph in terms of the third
largest adjacency eigenvalue. Later, Cioab\v{a}  et al. \cite{smcio2} gave a best upper bound on the third largest eigenvalue to ensure that an $r$-regular graph $G$ with order $n$ has a perfect matching when $n$ is even, and a matching of order $n-1$ when $n$ is odd.

In 2016, O \cite{SO4} investigated the existence of fractional perfect matchings in terms of the spectral radius of graphs. Later, Pan et al. \cite{Yp} furtherly studied this topic from the signless Laplacian spectral radius of graphs.
Very recently,  O \cite{SO3} obtained the relation between the spectral radius and perfect matchings in an $n$-vertex graph $G$ by firstly determining a lower bound on the number of edges of $G$ which guarantees the existence of a perfect matching in $G$.

%
\begin{theorem}[See Theorem 1.2 in \cite{SO3}]\label{edgematch}
	Let $G$ be an $n$-vertex connected graph, where $n\geq4$ is an even number. Then $G$ has a perfect matching if
	\begin{equation*}
		|E(G)|>\begin{cases} \frac{1}{2}n^2-\frac{5}{2}n+5, & \mbox{for }n\geq10\mbox{ or } n=4,\\ 
			9, & \mbox{for }n=6,\\
			18, & \mbox{for }n=8. \end{cases}
	\end{equation*}
\end{theorem}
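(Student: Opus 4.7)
The plan is to prove the edge-count condition by contradiction via Tutte's 1-factor theorem. If $G$ has no perfect matching, there exists $S\subsetneq V(G)$ with $o(G-S)>|S|$; writing $s=|S|$ and $q=o(G-S)$, the parity of $o(G-S)-|S|$ matches the parity of $n$, so since $n$ is even we have $q\geq s+2$. Connectivity of $G$ together with $n$ even also forces $s\geq 1$, since $S=\emptyset$ would give $o(G)\geq 2$, contradicting connectedness.

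Next, I would derive an upper bound on $|E(G)|$ in terms of $s$ and $q$ by identifying the extremal structure. Among all graphs on $n$ vertices with a Tutte set of size $s$ yielding $q$ odd components, the edge count is maximized by taking $G[S]$ to be a complete graph (contributing $\binom{s}{2}$), joining every vertex of $S$ to every vertex outside (contributing $s(n-s)$), and making the odd components of $G-S$ as lopsided as possible: $q-1$ isolated vertices plus one clique on $n-s-q+1$ vertices (contributing $\binom{n-s-q+1}{2}$); since $n$ is even and $q\equiv s\pmod 2$, this last quantity is odd, which is consistent. Because $\binom{n-s-q+1}{2}$ decreases in $q$, the minimum admissible value $q=s+2$ maximizes the bound, giving
\begin{equation*}
|E(G)|\;\leq\;f(s)\;:=\;\binom{s}{2}+s(n-s)+\binom{n-2s-1}{2}.
\end{equation*}

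I would then optimize $f(s)$ over $1\leq s\leq n/2-1$. A direct expansion shows
\begin{equation*}
f(s)=\tfrac{1}{2}\bigl(3s^{2}+(5-2n)s+n^{2}-3n+2\bigr),
\end{equation*}
a quadratic in $s$ with positive leading coefficient, so its maximum on the feasible interval is attained at one of the two endpoints. A short computation gives $f(1)=\tfrac{1}{2}n^{2}-\tfrac{5}{2}n+5$ and $f(n/2-1)=\tfrac{3n(n-2)}{8}$. Their difference factors so that $f(1)\geq f(n/2-1)$ exactly when $n\leq 4$ or $n\geq 10$, while for $n=6$ and $n=8$ the value $f(n/2-1)$ dominates, giving $9$ and $18$ respectively. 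In every case the claimed bound equals $\max_{s} f(s)$, so $|E(G)|$ exceeding that bound contradicts the existence of the Tutte set $S$, and $G$ must have a perfect matching.

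The routine part is the quadratic optimization; the step that needs the most care is the extremal argument asserting that the configuration with $q-1$ singletons and one clique genuinely maximizes edges for fixed $(s,q)$, and that concentrating all non-$S$ vertices into odd components (rather than wasting any on even ones) is optimal. This should follow from the convexity of $n\mapsto\binom{n}{2}$, which forces consolidation of vertices into a single component when one is allowed to shift vertices around while preserving odd-component parity. The small-$n$ comparison for $n\in\{6,8\}$ is the only place where the piecewise statement becomes essential, so I would verify those two cases by exhibiting the extremal graphs ($K_{2}\vee 4K_{1}$ for $n=6$ and $K_{3}\vee 5K_{1}$ for $n=8$) to confirm sharpness and to make sure no other configurations with $q>s+2$ sneak in higher edge counts.
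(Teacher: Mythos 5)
Your argument is correct and follows essentially the same route as this paper takes: the paper does not reprove Theorem \ref{edgematch} (it cites it from \cite{SO3}), but its proof of the spectral analogue, Theorem \ref{them1}, uses exactly your skeleton --- a Tutte set $S$ with the parity bound $o(G-S)\geq s+2$, completion of $G[S]$ and the components to cliques, consolidation of $G-S$ into one large odd component plus $k-1$ singletons, reduction to $k=s+2$, and finally the endpoint comparison over $s$ that produces the exceptional cases $n=6,8$ with extremal graphs $K_2\vee\overline{K_4}$ and $K_3\vee\overline{K_5}$. Your version is sound as written; the only point worth stating explicitly is the standard lemma that a graph on $m$ vertices with at least $c$ components has at most $\binom{m-c+1}{2}$ edges, which justifies both the consolidation step and the fact that even components cannot help.
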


Moreover, O pointed out that this bound is sharp by calculating the number of edges of graphs $K_{n-3}\vee K_1 \vee \overline{K_2}$, $K_2\vee\overline{K_4}$ and $K_3\vee\overline{K_5}$ , which has no perfect matchings. 

Motivated by \cite{SO4,SO3,Yp}, it is natural to consider the relations between the signless Laplacian spectral radius and  matchings in graphs. By following the proof of O \cite{SO3}, we obtain the main results as below.

 \begin{theorem}\label{them1}
	Let $G$ be an $n$-vertex connected graph, where $n\geq4$ is an even number.  The largest root of the equation $x^3-(3n-7)x^2+n(2n-7)x-2(n^2-7n+12)=0$ is denoted by $r(n)$. Then $G$ has a perfect matching if
	\begin{equation*}
	q_1(G)>\begin{cases} $r(n)$, & \mbox{for }n\geq10\mbox{ or } n=4,\\ 
	4+2\sqrt{3}, & \mbox{for }n=6,\\
	6+2\sqrt{6}, & \mbox{for }n=8. \end{cases}
	\end{equation*}
\end{theorem}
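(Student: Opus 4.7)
The plan is to argue by contrapositive. Suppose the $n$-vertex connected graph $G$ has no perfect matching. By Tutte's $1$-factor theorem there exists $S\subseteq V(G)$ with $o(G-S)>|S|$, and since $n$ is even the two sides share the same parity, so in fact $o(G-S)\geq |S|+2$. Write $s=|S|$ and $k=o(G-S)\geq s+2$, and let $C_1,\dots,C_k$ be the odd components of $G-S$ of orders $n_i$. Connectivity of $G$ rules out $s=0$, so $s\geq 1$.

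The first reduction uses monotonicity of $q_1$ under edge addition. Adding to $G$ every edge missing inside $S$, inside each $C_i$, or between $S$ and $V(G)\setminus S$ leaves the odd components of $G-S$ unchanged, so the supergraph $G'=K_s\vee(K_{n_1}\cup\cdots\cup K_{n_k})$ still fails Tutte's condition while $q_1(G)\leq q_1(G')$. It is thus enough to bound $q_1$ over the family of canonical graphs of this form with $n_i$ odd, $\sum n_i=n-s$, $k\geq s+2$, $s\geq 1$. The second reduction consolidates small odd components: whenever two components $K_a,K_b$ with $a,b\geq 3$ are present, replacing them by $K_{a+b-1}\cup K_1$ preserves component parities, component count, and total vertex count while adding $(a-1)(b-1)>0$ edges. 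I would show (by a Rayleigh-quotient comparison using the Perron vector of the ``before'' graph as a test vector for the ``after'' graph, or equivalently by an interlacing argument on the equitable-partition quotient matrices) that this merge operation strictly increases $q_1$. Iterating, I may restrict attention to the single candidate
\[
G_s^\ast \;:=\; K_s\vee\bigl(K_{n-2s-1}\cup \overline{K_{s+1}}\bigr), \qquad 1\leq s\leq \tfrac{n-2}{2},
\]
noting that $G_s^\ast$ collapses to $K_s\vee\overline{K_{n-s}}$ when $n-2s-1=1$ (i.e.\ $s=(n-2)/2$).

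Each $G_s^\ast$ admits a natural equitable partition into $S$, the large clique, and the singletons, so $q_1(G_s^\ast)$ equals the Perron eigenvalue of a $3\times 3$ (or $2\times 2$) quotient matrix whose entries are read off directly from the block degrees. At $s=1$ the graph is $G_1^\ast=K_1\vee(K_{n-3}\cup\overline{K_2})$, and expanding the $3\times 3$ quotient characteristic polynomial reproduces exactly $x^3-(3n-7)x^2+n(2n-7)x-2(n^2-7n+12)$, so $q_1(G_1^\ast)=r(n)$. The small-$n$ thresholds come from different extremal $s$: for $n=6$ the graph $G_2^\ast=K_2\vee\overline{K_4}$ has $2\times 2$ quotient $\bigl(\begin{smallmatrix}6 & 4\\ 2 & 2\end{smallmatrix}\bigr)$ with top eigenvalue $4+2\sqrt{3}$, and for $n=8$ the graph $G_3^\ast=K_3\vee\overline{K_5}$ analogously yields $6+2\sqrt{6}$. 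The last step is the cross-$s$ comparison: prove that $q_1(G_s^\ast)\leq r(n)$ for all admissible $s\geq 2$ when $n\geq 10$, and run the analogous comparisons for $n\in\{4,6,8\}$, identifying the extremal index $s_0=1$ (when $n\geq 10$ or $n=4$), $s_0=2$ (when $n=6$), and $s_0=3$ (when $n=8$). This is done by evaluating the quotient characteristic polynomial of $G_s^\ast$ at $x=q_1(G_{s_0}^\ast)$ and verifying the sign.

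The main obstacle is this final cross-$s$ comparison: both the polynomials to be compared and the extremal index $s_0$ depend on $n$, and the margin shrinks precisely in the small-$n$ regime $n\in\{6,8\}$, forcing the case split in the theorem. The intermediate clique-merging monotonicity is also nontrivial, since the operation both removes and adds edges, so the elementary subgraph-monotonicity of $q_1$ does not apply directly; one must instead justify the Perron-vector comparison with strict inequalities among the Perron entries on the two cliques being merged, which is where the assumption $a,b\geq 3$ is used.
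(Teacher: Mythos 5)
Your overall strategy is the paper's: Tutte's theorem produces $S$ with $k=o(G-S)\geq s+2$; you complete to the canonical supergraph $K_s\vee(K_{n_1}\cup\cdots\cup K_{n_k})$ and invoke edge-monotonicity of $q_1$; you consolidate components via a spectral perturbation argument on equitable quotient matrices; and you finish by comparing quotient characteristic polynomials across $s$, with the $n\in\{6,8\}$ anomaly explained by the extremal index moving to $s=2$ and $s=3$. Your clique-merge $K_a\cup K_b\to K_{a+b-1}\cup K_1$ differs only cosmetically from the paper's operation of moving two vertices at a time from the smallest component to the largest, and your Rayleigh-quotient justification is the right tool (the paper instead evaluates the perturbed characteristic polynomial at the old Perron root and checks its sign). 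Your identification of $G_1^\ast=K_1\vee(K_{n-3}\cup\overline{K_2})$ and of the cubic $x^3-(3n-7)x^2+n(2n-7)x-2(n^2-7n+12)$ matches the paper exactly.

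There is, however, one concrete gap. Your merge operation, as you yourself state, preserves the number of components, so iterating it from a configuration with $k$ components terminates at $K_s\vee(K_{n-s-k+1}\cup\overline{K_{k-1}})$ --- a two-parameter family in $(s,k)$ with $k\geq s+2$ --- and not at the single candidate $G_s^\ast$, which has exactly $k=s+2$ components. Your final cross-comparison only tests $G_s^\ast$, so all configurations with $k\geq s+4$ are never ruled out; the claim ``iterating, I may restrict attention to the single candidate'' is false as written. The paper closes this with a second perturbation (delete two singleton components and attach their vertices to the big clique, reducing $k$ by $2$, then check the sign of the perturbed polynomial at the old root). In fact this missing step is easier than the merge step you do justify: absorbing singletons into the large clique only adds edges, so the plain monotonicity of Lemma \ref{largerrd} already gives $q_1$ non-decreasing, no Perron-vector comparison needed. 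With that step inserted, your outline coincides with the paper's proof; the remaining labor is the sign verification $h(r(n))\geq 0$ for $n_1\geq 3$ and the separate $2\times 2$ quotient analysis when $n_1=1$, exactly as you anticipate.
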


Theorem \ref{edgematch} and \ref{them1} show that if $G$ has the maximum number of edges among the $n$-vertex graphs without a perfect matching, then $G$ has the maximum signless Laplacian spectral radius among them, except $n=6$ or $n=8$. The two special cases $n=6$ and $n=8$ says that even if graphs $G$ and $H$ share a certain graph property and $|E(G)|>|E(H)|$, we cannot guarantee that $q_1(G)>q_1(H)$. 

\section{Preliminaries}

For a matrix $B$, let $\rho(B)$ be the largest eigenvalue of $B$.
\begin{lemma}\cite{Cgod}\label{largerrd}
	Let $B$ and $B_1$ be real nonnegative matrices such that $B-B_1$ is nonnegative, then $\rho(B_1)\leq\rho(B)$.
\end{lemma}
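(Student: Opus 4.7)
My plan is to deduce the spectral radius inequality directly from the entrywise matrix inequality $0 \leq B_1 \leq B$ by way of Gelfand's spectral radius formula $\rho(M) = \lim_{k\to\infty} \|M^k\|^{1/k}$. The strategy splits into two short steps: first show that entrywise dominance between nonnegative matrices is preserved under taking powers, and then pass to the limit using a matrix norm that also respects entrywise dominance.

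For the first step I would verify by induction on $k$ that $0 \leq B_1^k \leq B^k$ entrywise for every positive integer $k$. The base case $k=1$ is given. The inductive step rests on the elementary observation that if $0 \leq A' \leq A$ and $0 \leq C' \leq C$ entrywise, then $A'C' \leq AC$ entrywise, because each entry $(A'C')_{ij} = \sum_t A'_{it}C'_{tj}$ is a sum of products of nonnegative scalars, each bounded above by the corresponding term appearing in $(AC)_{ij}$. Pairing this with the inductive hypothesis and the hypothesis $B_1 \leq B$ yields $B_1^{k+1} \leq B^{k+1}$.

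For the second step I would take the entrywise $\ell_1$-norm $\|M\|_{\mathrm{sum}} = \sum_{i,j}|m_{ij}|$, which is a genuine submultiplicative matrix norm and, crucially, respects entrywise dominance of nonnegative matrices: $0 \leq M' \leq M$ implies $\|M'\|_{\mathrm{sum}} \leq \|M\|_{\mathrm{sum}}$. Combining this with Step~1 gives $\|B_1^k\|_{\mathrm{sum}} \leq \|B^k\|_{\mathrm{sum}}$ for every $k \geq 1$. Since all norms on the finite-dimensional space of square matrices are equivalent, Gelfand's formula still reads $\rho(M) = \lim_{k\to\infty} \|M^k\|_{\mathrm{sum}}^{1/k}$; taking $k$-th roots and passing to the limit in the previous inequality yields $\rho(B_1) \leq \rho(B)$, as required.

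I expect the only real subtlety to be the verification that $\|\cdot\|_{\mathrm{sum}}$ is a legitimate submultiplicative matrix norm so that Gelfand's formula is applicable, but this is routine. An alternative route I would keep in reserve is a Perron--Frobenius argument: pick a nonnegative eigenvector $v$ of $B_1$ with $B_1 v = \rho(B_1) v$, observe $B v \geq B_1 v = \rho(B_1) v$ entrywise, and invoke the Collatz--Wielandt characterization $\rho(B) = \max_{x \geq 0,\, x\neq 0}\; \min_{i:\, x_i>0}(Bx)_i/x_i$ to conclude $\rho(B) \geq \rho(B_1)$. This path requires a brief perturbation argument when $B_1$ is reducible and $v$ has zero components, which is why I prefer the Gelfand formulation as the main line of attack.
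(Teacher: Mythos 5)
Your proof is correct. Note first that the paper itself offers no proof of this lemma: it is quoted from Godsil and Royle's \emph{Algebraic Graph Theory} as a known fact, so there is no in-paper argument to compare against. Your main line via Gelfand's formula is sound: the induction showing $0\leq B_1^k\leq B^k$ entrywise is the standard product-monotonicity argument, the entrywise sum norm is indeed submultiplicative and monotone on nonnegative matrices, and Gelfand's formula $\rho(M)=\lim_k\|M^k\|^{1/k}$ is norm-independent in finite dimensions, so passing to the limit in $\|B_1^k\|_{\mathrm{sum}}^{1/k}\leq\|B^k\|_{\mathrm{sum}}^{1/k}$ gives the claim. You are also right to be wary of the Collatz--Wielandt alternative: when $B_1$ is reducible its Perron vector may have zero entries, and the naive eigenvector comparison needs either a perturbation $B+\epsilon J$ or the ``$\max$--$\min$ over the support'' form of the characterization; the Gelfand route sidesteps this entirely and is the cleaner choice here. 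One small remark: for the application in this paper the matrices compared are signless Laplacians and their quotient-related companions, which are symmetric or similar to nonnegative matrices, but your argument needs no such structure and proves the lemma in the stated generality.
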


We now explain the concepts of equitable matrices and equitable partitions.
\begin{definition}\label{def1}\cite{aebrouw}
	Suppose $B$ is a symmetric real matrix of order $n$ whose rows and columns are indexed by $P=\left\lbrace1,2,\cdots,n \right\rbrace $. Let $\left\lbrace P_1,P_2,\cdots,P_m \right\rbrace $ be a partition of $P$. Denote $n_i=|P_i|$ and then $n=n_1+n_2+\cdots+n_m$. Let $B$ be partitioned according $\left\lbrace P_1,P_2,\cdots,P_m \right\rbrace$, that is
	\begin{equation*}
	B=\begin{pmatrix}
	B_{1,1} & B_{1,2} & \cdots & B_{1,m}\\
	B_{2,1} & B_{2,2} & \cdots & B_{2,m}\\
	\vdots & \vdots & \ddots & \vdots\\
	B_{m,1} & B_{m,2} & \cdots & B_{m,m}
	\end{pmatrix}_{n\times n},
	\end{equation*}
	where the blocks $B_{i,j}$ denotes the submatrix of $B$ formed by rows in $P_i$ and the $P_j$ columns. Let $c_{i,j}$ denote the average row sum of $B_{i,j}$. Then the matrix $C=(c_{i,j})$ is called the quotient matrix of $B$ w.r.t. the given partition. Particularly, if the row sum of each submatrix $B_{i,j}$ is constant then the partition is called equitable.
\end{definition}

\begin{lemma}\cite{lhYou}\label{equit}
	Let $C$ be an equitable quotient matrix of $B$ as defined in Definition \ref{def1}. If $B$ is a nonnegative matrix, then $\rho(C)=\rho(B)$.
\end{lemma}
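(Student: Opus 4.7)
The plan is to realize the equitable partition via its characteristic matrix and reduce the lemma to two short eigenvalue arguments. Let $S$ be the $n\times m$ matrix whose columns are the indicator vectors of $P_1,\ldots,P_m$; thus $S_{ij}=1$ if $i\in P_j$ and $S_{ij}=0$ otherwise, and the columns of $S$ are linearly independent. The first step is to translate the equitability hypothesis into the single matrix identity $BS=SC$: for $i\in P_l$, the $(i,j)$ entry of $BS$ is $\sum_{k\in P_j}B_{ik}$, which by equitability is the common row sum $c_{l,j}$ of the block $B_{l,j}$, and this matches $(SC)_{ij}$ exactly.

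With $BS=SC$ in hand I would establish the two inequalities separately. For $\rho(C)\le\rho(B)$: if $Cv=\lambda v$ with $v\ne 0$, then $B(Sv)=S(Cv)=\lambda(Sv)$, and $Sv\ne 0$ because $S$ has full column rank, so $\lambda$ is an eigenvalue of $B$. Taking $\lambda=\rho(C)$ yields $\rho(C)\le\rho(B)$.

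The reverse inequality $\rho(B)\le\rho(C)$ is where both the symmetry of $B$ (from Definition~\ref{def1}) and the nonnegativity assumption are used. Transposing $BS=SC$ and using $B^{T}=B$ gives $S^{T}B=C^{T}S^{T}$. By the Perron--Frobenius theorem applied to the nonnegative matrix $B$, there exists $x\ge 0$ with $x\ne 0$ satisfying $Bx=\rho(B)\,x$. Applying $S^{T}B=C^{T}S^{T}$ to $x$ on the right produces $C^{T}(S^{T}x)=\rho(B)\,(S^{T}x)$. The vector $S^{T}x$ has entries $(S^{T}x)_{j}=\sum_{i\in P_j}x_i\ge 0$, and since $x$ is nonnegative and nontrivial at least one such partial sum is strictly positive, so $S^{T}x\ne 0$. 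Hence $\rho(B)$ is an eigenvalue of $C^{T}$, and therefore of $C$, which gives $\rho(B)\le\rho(C)$. Combining the two inequalities yields $\rho(C)=\rho(B)$.

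The main subtlety is this second direction: the identity $BS=SC$ alone only forces $\mathrm{spec}(C)\subseteq\mathrm{spec}(B)$, which is not enough for equality of spectral radii. The nonnegativity hypothesis is what rescues us, via the existence of a nonnegative Perron eigenvector $x$ for $\rho(B)$, because the partial sums $\sum_{i\in P_j}x_i$ are then automatically not all zero and serve as an eigenvector of $C^{T}$ with eigenvalue $\rho(B)$. Without nonnegativity (or some replacement), the Perron eigenvector could be orthogonal to the column span of $S$ and the conclusion would fail.
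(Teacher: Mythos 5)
Your proof is correct and complete: the identity $BS=SC$ gives $\mathrm{spec}(C)\subseteq\mathrm{spec}(B)$, and the Perron eigenvector argument via $S^{T}B=C^{T}S^{T}$ correctly supplies the reverse inequality, which is exactly where nonnegativity is needed. The paper itself offers no proof of this lemma --- it is quoted from the reference \cite{lhYou} --- and your argument is essentially the standard one given there, so there is nothing to flag.
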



\section{Proof of Theorem \ref{them1}}

%
Similar to the proof technique used by O \cite{SO3}, we now give the proof of Theorem \ref{them1}.
\begin{proof}
	Assume to the contrary that $G$ has no perfect matching.  By Tutte's 1-factor theorem, there exists $S\subset V(G)$ such that $o(G-S)-|S|\geq1$, and all components of $G-S$ are odd, otherwise, we can remove one vertex from each even component to the set S, in consequence, the number of odd component and the number of vertices in $S$ have the same increase, so that
	$o(G-S)$ is always larger than $|S|$ and all components of $G-S$ are odd. Since $n$ is an even positive integer, then we have $o(G-S)$ and $|S|$ have same parity. Let $k=o(G-S)$ and $s=|S|$, then $k\geq s+2$. 
	
	Let $G_1, G_2,\cdots,G_k$ be the components of $G-S$ with $|V(G_1)|\geq |V(G_2)|\geq\cdots\geq|V(G_k)|$. Note that $n=s+n_1+n_2+\cdots+n_k$, where $n_i=|V(G_i)|$ ($i=1,2,\cdots,k$). To find the feasible maximum signless Laplacian spectral radius, we construct a new graph $G'$ by joining $S$ and $G-S$ and by adding edges in $S$ and in all components in $G-S$ so that $G[S]$ and all components in $G'-S$ are cliques. By Lemma \ref{largerrd}, we get $q_1(G)\leq q_1(G')$.
	
	
    The quotient matrix of the signless Laplacian matrix $Q(G')$ of the graph $G'$ with the vertex partition $\left\lbrace S,V(G_1),V(G_2),\cdots,V(G_k)\right\rbrace $ can be expressed as
	\begin{equation*}
	M_1 =\begin{pmatrix}
		n+s-2  & n_1      & n_2      & \cdots & n_k      \\
		s      & 2n_1+s-2 & 0        & \cdots & 0        \\
		s      & 0        & 2n_2+s-2 & \cdots & 0        \\
		\vdots & \vdots   & \vdots   &        & \vdots   \\
		s      & 0        & 0        & \cdots & 2n_k+s-2
	\end{pmatrix}
	\end{equation*}
	
	Let $f(x)$ be the characteristic polynomial of the matrix $M_1$. By expanding the above matrix determinant on the first line, we have
	\begin{align}
		f(x)&=(x-n-s+2)(x-2n_1-s+2)\cdots(x-2n_k-s+2)-sn_1(x-2n_2-s+2)\cdots(x-2n_k-s+2)\nonumber\\
		&~~~~~~+sn_2(x-2n_1-s+2)(x-2n_3-s+2)\cdots(x-2n_k-s+2)+\cdots\nonumber\\
		&~~~~~~+(-1)^{i}sn_i(x-2n_1-s+2)\cdots(x-2n_{i-1}-s+2)(x-2n_{i+1}-s+2)\cdots(x-2n_k-s+2)+\cdots\nonumber\\
		&~~~~~~+(-1)^{k}sn_k(x-2n_1-s+2)\cdots(x-2n_{k-1}-s+2)\label{poly1}.
	\end{align}
	
	Note that $M_1$ is an equitable quotient matrix of $Q(G')$. By Lemma \ref{equit}, we obtain $q_1(G')=r_{f}$, where $r_f$ is the largest root of the equation $f(x)=0$. Moreover, by Lemma \ref{largerrd}, we have $r_f>n+s-2$, and $r_f>2n_1+2s-2$.
	
	
	If $n_k\geq3$, similarly, we consider a new graph $G''$, which is obtained from $G'$
	by deleting two vertices in $G_k$ and adding two vertices to $G_1$ by joining the two vertices to the vertices in $V(G_1)$ and $S$.
For the partition $\left\lbrace S,V(G'_1),V(G_2),\cdots,V(G_{k-1}),V(G'_k)\right\rbrace$ of  $G''$, the corresponding quotient matrix $M_2$ of $Q(G'')$ has the characteristic polynomial $\tilde{f}(x)$ obtained from $f(x)$ by replacing $n_1$ and $n_k$ by $n_1+2$ and $n_k-2$, respectively. Thus,
	\begin{align*}
	\tilde{f}(x)&=(x-n-s+2)(x-2n_1-s-2)(x-2n_2-s+2)\cdots(x-2n_k-s+6)\\
	&~~~~~~-s(n_1+2)(x-2n_2-s+2)\cdots(x-2n_k-s+6)\\
	&~~~~~~+sn_2(x-2n_1-s-2)(x-2n_3-s+2)\cdots(x-2n_k-s+6)+\cdots\\
	&~~~~~~+(-1)^{i}sn_i(x-2n_1-s-2)\cdots(x-2n_{i-1}-s+2)(x-2n_{i+1}-s+2)\cdots(x-2n_k-s+6)+\cdots\nonumber\\
	&~~~~~~+(-1)^{k}s(n_k-2)(x-2n_1-s-2)(x-2n_2-s+2)\cdots(x-2n_{k-1}-s+2)\\
	&=f(x)+8(n_k-n_1-2)(x-n-s+2)(x-2n_2-s+2)\cdots(x-2n_{k-1}-s+2)\\
	&~~~~~~-2(x+2n_1-2n_k-s+6)s(x-2n_2-s+2)\cdots(x-2n_{k-1}-s+2)\\
	&~~~~~~+8(n_k-n_1-2)sn_2(x-2n_3-s+2)\cdots(x-2n_{k-1}-s+2)\\
	&~~~~~~-8(n_k-n_1-2)sn_3(x-2n_2-s+2)(x-2n_4-s+2)\cdots(x-2n_{k-1}-s+2)+\cdots\\
	&~~~~~~+(-1)^{q}\cdot(-2)(x+2n_1-2n_k-s-2)s(x-2n_2-s+2)\cdots(x-2n_{k-1}-s+2).
	\end{align*}
	 
	 Note that $f(r_f)=0$, $r_f>n+s-2$, $r_f>2n_1+2s-2$, and $n_1\geq\cdots\geq n_k$. By plugging the value $r_f$ into $x$ of $\tilde{f}(x)$, we have
	 \begin{align*}
	 \tilde{f}(r_f)&=8(n_k-n_1-2)(r_f-n-s+2)(r_f-2n_2-s+2)\cdots(r_f-2n_{k-1}-s+2)\\
	 &~~~~~~-2(r_f+2n_1-2n_k-s+6)s(r_f-2n_2-s+2)\cdots(r_f-2n_{k-1}-s+2)\\
	 &~~~~~~+8(n_k-n_1-2)sn_2(r_f-2n_3-s+2)\cdots(r_f-2n_{k-1}-s+2)\\
	 &~~~~~~-8(n_k-n_1-2)sn_3(r_f-2n_2-s+2)(r_f-2n_4-s+2)\cdots(r_f-2n_{k-1}-s+2)+\cdots\\
	 &~~~~~~+(-1)^{q}\cdot(-2)(r_f+2n_1-2n_k-s-2)s(r_f-2n_2-s+2)\cdots(r_f-2n_{k-1}-s+2)<0,
	 \end{align*}
	 which implies that $r_{f}<r_{\tilde{f}}$, where $r_{\tilde{f}}$ is the largest root of the equation $\tilde{f}(x)=0$. Since $M_2$ is an equitable quotient matrix of $Q(G'')$, we have $q_1(G')<q_1(G'')$. From the above discussion, we have $G'$ has the maximum spectral radius if $n_1=n-s-k+1$ and $n_i=1$ for all $2\leq i\leq k$.  Now, we suppose $|E(G')|=\dbinom{s+n_1}{2}+s(k-1)$, where $n_1=n-s-k+1$. There exists a partition $\left\lbrace V(G_1),S,V(G)-S-V(G_1) \right\rbrace $ of $V(G')$. The corresponding quotient matrix $M_3$ of $Q(G')$ has the form
	 \begin{equation*}
	 M_3=\begin{pmatrix}
	 2n_1+s-2 & s & 0\\
	 n_1 & n+s-2 & k-1 \\
	 0 & s & s
	 \end{pmatrix}.
	 \end{equation*}
	 
	  The characteristic polynomial $g(x)$ of the matrix $M_3$ can be calculated as below by expanding $M_3$'s determinant on the first line:
	 \begin{equation}
	 g(x)=(x-2n_1-s+2)[(x-n-s+2)(x-s)-s(k-1)]-n_1s(x-s).	 
	 \end{equation}
 
	 Let $r_g$ be the largest root of the equation $g(x)=0$. By Lemma \ref{equit}, we have $r_g>2n_1+s-2$, $r_g> n+s-2$ and $r_g>s$. Combining $g(r_g)=0$, yields $(r_g-n-s+2)(r_g-s)-s(k-1)>0$.
	 
	 Next, we construct a new graph $G'''$ obtained from $G'$ deleting the two components $G_k$ and $G_{k-1}$, which are single vertices, and adding two vertices to $G_1$ by joining the two vertices to the vertices in $V(G_1)$ and $S$.
	One can see that $G'''-S$ only has $k-2$ components and $n''_1=|V(G''_1)|=n_1+2$. Consider the partition $\left\lbrace V(G''),S,V(G)-S-V(G_1)\right\rbrace $, the characteristic polynomial of the quotient matrix of signless Laplacian matrix $Q(G''')$ of graph $G'''$ equals
	 \begin{align*}
	 \tilde{g}(x)&=(x-2n_1-s-2)[(x-n-s+2)(x-s)-s(k-3)]-(n_1+2)s(x-s)\\
	 &=g(x)-4sn_1-4s-4[(x-n-s+2)(x-s)-s(k-1)].
	 \end{align*}
	 
	 Similarly, plugging the value of $r_g$ into $\tilde{g}(x)$ yields
	 \begin{equation*}
	\tilde{g}(r_g)=-4sn_1-4s-4[(r_g-n-s+2)(r_g-s)-s(k-1)]<0,
	 \end{equation*}
	 which implies that $r_g<r_{\tilde{g}}$, where $r_{\tilde{g}}$ is the largest root of the equation $\tilde{g}(x)=0$. Since these partitions are equitable, we have $q_1(G')=r_g$ and $q_1(G''')=r_{\tilde{g}}$. Hence, $q_1(G')<q_1(G''')$ and $G'$ has the maximum spectral radius if $k=s+2$, i.e., $n_1=n-2s-1$. Similarly, we suppose that $|E(G')|=\dbinom{s+n_1}{2}+s(s+1)$, where $n_1=n-2s-1$. The quotient matrix of $Q(G')$ according to the partition $\left\lbrace V(G_1),S,V(G)-S-V(G_1) \right\rbrace $ is
	 \begin{equation*}
	 M_4=\begin{pmatrix}
	 2n_1+s-2 & s & 0\\
	 n_1 & n+s-2 & s+1 \\
	 0 & s & s
	 \end{pmatrix}.
	 \end{equation*}
	 
	 The characteristic polynomial of $M_4$ equals
	 \begin{align*}
	 h(x)&=(x-2n_1-s+2)[(x-n-s+2)(x-s)-s(s+1)]-n_1s(x-s)\\
	 &=(x-2n+3s+4)[(x-n-s+2)(x-s)-s(s+1)]-(n-2s-1)s(x-s)\\
	 &=x^3+(s-3n+6)x^2+(2n^2+ns-8n-4s^2-4s+8)x-2s(n^2-2ns-5n+s^2+5s+6).
	 \end{align*}
	 
	 Next, we discuss the difference between $q_1(G')$ and $q_1(K_{n-3}\vee K_1 \vee \overline{K_2})$. For $n\geq4$, one can easily check that $q_1(K_{n-3}\vee K_1 \vee \overline{K_2})=r(n)$. Furthermore, we have
	 \begin{align}
	 r(n)&=n+\frac{2^{\frac{2}{3}}\cdot\left(63n+3\sqrt{3}\cdot\sqrt{-4n^6+84n^5-781n^4+4074n^3-12633n^2+2232n-17376}-9n^2-38\right)^{\frac{1}{3}} }{6}\nonumber\\
	 &~~~~+\frac{2^{\frac{1}{3}}\cdot\left(3n^2-21n+49 \right) }{3\left(63n+3\sqrt{3}\cdot\sqrt{-4n^6+84n^5-781n^4+4074n^3-12633n^2+2232n-17376}-9n^2-38 \right)^{\frac{1}{3}} }-\frac{7}{3}.\label{root1}
	 \end{align}
	 
	 Plugging the value $r(n)$ into $h(x)$ yields
	 \begin{align*}
	 h(r(n))&=(s-1)r^2(n)+(ns-n-4s^2-4s+8)r(n)-2(sn^2-2ns^2-5ns+s^3+5s^2+6s-n^2+7n-12)\\
	 &=(s-1)\left[r^2(n)+(n-4s-8)r(n)-2(n^2-2ns-7n+s^2+6s+12) \right]. 
	 \end{align*}
	 
	 Note that $n\geq 2s+4$ (or $n_1\geq3$), then $h(r(n))\geq(s-1)\left[r^2(n)-(2s+4)r(n)-2s^2 \right] $. Together with (\ref{root1}), we obtain that $r^2(n)-(2s+4)r(n)-2s^2\geq4.2843$ with equality holding if and only if $s=1$ and $n_1=3$, which implies that $h(r(n))\geq0$.
	 
	 Finally, let $n_1=1$, i.e., $n=2s+2$. The quotient matrix of $Q(G')$ according to the partition $\left\lbrace S, V(G')-S \right\rbrace $ is
	 \begin{equation*}
	 M_5=\begin{pmatrix}
	 	n+s-2 & s+2 \\
	 	s     & s
	 \end{pmatrix}.
	 \end{equation*}
	 
	 The characteristic polynomial of $M_5$ equals
	 \begin{equation*}
	 l(x)=x^2+(2-2s-n)x+(sn-4s).
	 \end{equation*}
 	
 	Let $r_l$ be the largest root of the equation $l(x)=0$. By calculation, we have
	 \begin{equation*}
	 r_l=\frac{n+2s-2+\sqrt{n^2-4n+4s^2+8s+4}}{2}=\frac{2n-4+\sqrt{2n(n-2)}}{2}.
	 \end{equation*} 
	 
	 From the above consideration, it's not difficult to find the relation between $r(n)$ and $r_{l}$: (\romannumeral1)  For $n\geq10$ (or $s\geq3$), then $r(n)>r_l$; (\romannumeral2) For $n=6$ and $n=8$, (or $s=2$ and $s=3$), then $r(n)<r_l$; (\romannumeral3) For $n=4$ (or $s=1$), then $r(n)=r_l$.
\end{proof}

\section{Extremal graphs}
%

Finally, we  determine the signless Laplacian spectral radius of $K_{n-3}\vee K_1 \vee \overline{K_2}$, $K_2\vee\overline{K_4}$ and $K_3\vee\overline{K_5}$ to show that the bound on the signless Laplacian spectral radius shown in Theorem \ref{them1} is sharp. In addition, the graph $K_{n-3}\vee K_1 \vee \overline{K_2}$, $K_2\vee\overline{K_4}$ also can be used to prove that the bound given by Zhao et al. \cite{Yan} is sharp.
\begin{theorem}
	Let $r(n)$ be the largest root of the equation $x^3-(3n-7)x^2+n(2n-7)x-2(n^2-7n+12)=0$. Then $q_1(K_{n-3}\vee K_1 \vee \overline{K_2})=r(n)$ holds for any even positive integer $n\geq4$. In addition, we have $q_1(K_2\vee\overline{K_4})=4+2\sqrt{3}$ and $q_1(K_3\vee\overline{K_5})=6+2\sqrt{6}$.
\end{theorem}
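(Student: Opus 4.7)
The plan is to exploit the symmetry of each graph by exhibiting an equitable vertex partition and then invoking Lemma~\ref{equit} to reduce the calculation of $q_1$ to finding the spectral radius of a small quotient matrix. Conveniently, all the quotient matrices and characteristic polynomials needed here have already appeared in the proof of Theorem~\ref{them1}, so the argument reduces to identifying the correct specialization.

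For the graph $K_{n-3}\vee K_1\vee \overline{K_2}$, I would take the three-class partition $\{A,B,C\}$ with $A=V(K_{n-3})$, $B=V(K_1)$, $C=V(\overline{K_2})$, of sizes $n-3$, $1$, $2$. Each vertex of $A$ has $n-4$ neighbors in $A$ and one neighbor (in $B$) outside, with no neighbors in $C$; the single vertex of $B$ is joined to all of $A$ and to both vertices of $C$; each vertex of $C$ is joined only to the vertex of $B$. All block row-sums of $Q$ are therefore constant, so the partition is equitable, and the resulting quotient matrix is exactly
\[
M=\begin{pmatrix} 2n-7 & 1 & 0 \\ n-3 & n-1 & 2 \\ 0 & 1 & 1 \end{pmatrix},
\]
which is the matrix $M_4$ of the proof of Theorem~\ref{them1} with $s=1$ and $n_1=n-3$. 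The characteristic-polynomial computation of $h(x)$ carried out there, specialized to $s=1$, gives $\det(xI-M)=x^3-(3n-7)x^2+n(2n-7)x-2(n^2-7n+12)$. Since $Q$ is nonnegative, Lemma~\ref{equit} then yields $q_1(K_{n-3}\vee K_1\vee\overline{K_2})=\rho(M)=r(n)$, the last equality being the very definition of $r(n)$.

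For $K_2\vee\overline{K_4}$ and $K_3\vee\overline{K_5}$ I would use the evidently equitable two-block partition $\{V(K_m),V(\overline{K_{n-m}})\}$; these are precisely the partitions that produce $M_5$ from the earlier proof with $(s,n)=(2,6)$ and $(s,n)=(3,8)$, giving the quotient matrices
\[
\begin{pmatrix}6 & 4\\ 2 & 2\end{pmatrix} \quad\text{and}\quad \begin{pmatrix}9 & 5\\ 3 & 3\end{pmatrix}
\]
with characteristic polynomials $x^2-8x+4$ and $x^2-12x+12$, whose largest roots are $4+2\sqrt{3}$ and $6+2\sqrt{6}$ respectively. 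Applying Lemma~\ref{equit} once more gives the two advertised equalities.

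There is no genuine obstacle here; the proof is essentially bookkeeping. The only thing one has to verify directly is the equitability of the three partitions, which is transparent from the definitions of the graphs, after which every remaining step is either a routine determinant expansion or a quadratic-formula computation already performed inside the proof of Theorem~\ref{them1}.
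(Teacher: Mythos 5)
Your proposal is correct and follows essentially the same route as the paper: exhibit the equitable partitions $\{V(K_{n-3}),V(K_1),V(\overline{K_2})\}$, $\{V(K_2),V(\overline{K_4})\}$, $\{V(K_3),V(\overline{K_5})\}$, form the same quotient matrices, and apply Lemma~\ref{equit} to read off $q_1$ from their characteristic polynomials. Your additional observation that these matrices are the specializations $M_4$ (with $s=1$, $n_1=n-3$) and $M_5$ (with $(s,n)=(2,6)$ and $(3,8)$) is accurate and consistent with the paper's computations.
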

\begin{proof}
	Consider the vertex partition $\left\lbrace V(K_{n-3}),V(K_1),V(\overline{K_2}) \right\rbrace $ of the graph $K_{n-3}\vee K_1 \vee \overline{K_2}$. The corresponding quotient matrix of $Q(K_{n-3}\vee K_1 \vee \overline{K_2})$ equals
	\begin{equation*}
	\begin{pmatrix}
		2n-7 & 1   & 0 \\
		n-3  & n-1 & 2 \\
		0    & 1   & 1
	\end{pmatrix}.
	\end{equation*}
	
	It's easy to calculate that the characteristic polynomial of the above matrix is $x^3-(3n-7)x^2+n(2n-7)x-2(n^2-7n+12)=0$. Since the above quotient matrix is equitable, then we have  $q_1(K_{n-3}\vee K_1 \vee \overline{K_2})=r(n)$.
	
	Analogously, for the vertex partition $\left\lbrace V(K_2),V(\overline{K_4}) \right\rbrace $ of the graph $K_2\vee\overline{K_4}$ and $\left\lbrace V(K_3),V(\overline{K_5}) \right\rbrace $ of the graph $K_3\vee\overline{K_5}$, the corresponding quotient matrix of $Q(K_2\vee\overline{K_4})$ and $Q(K_3\vee\overline{K_5})$ equal

	\begin{equation*}
	\begin{pmatrix}
	6 & 4 \\
	2 & 2
	\end{pmatrix},
	\end{equation*}
	and
	\begin{equation*}
	\begin{pmatrix}
	9 & 5 \\
	3 & 3
	\end{pmatrix}.
	\end{equation*}
	
	Since these partitions are equitable, calculating their spectral radius yields that $q_1(K_2\vee\overline{K_4})=4+2\sqrt{3}$ and $q_1(K_3\vee\overline{K_5})=6+2\sqrt{6}$.
\end{proof}

Note that $q_1(K_2\vee\overline{K_4})=4+2\sqrt{3}\approx7.4641>6.9095=r(6)$ and $q_1(K_3\vee\overline{K_5})=6+2\sqrt{6}\approx10.8990>10.5136=r(8)$. This verifies the conclusions in Theorem \ref{them1}.


\section*{Acknowledgements} The authors would like to express their sincere gratitude to all the referees for their careful reading and insightful suggestions.

\end{document}